\documentclass[11pt]{amsart}

\usepackage{amsmath,amssymb,amsthm}
\usepackage{microtype}
\usepackage[hidelinks]{hyperref}
\hypersetup{
  pdftitle={Finite-index extensions of essentially free countable Borel equivalence relations},
  pdfauthor={Jason Zesheng Chen}
}

\newtheorem{theorem}{Theorem}
\newcommand{\leB}{\leq_B}
\newcommand{\Fin}{\operatorname{Fin}}

\title[Finite-index extensions and essential freeness]{Finite-index extensions of essentially free\\
countable Borel equivalence relations}
\author{Jason Zesheng Chen}
\date{September 2026}

\begin{document}

\begin{abstract}
We show that finite-index extensions of essentially free countable
Borel equivalence relations are essentially free, answering a question
of Kechris. 
\end{abstract}

\maketitle

A countable Borel equivalence relation is \emph{free} if it is induced
by a free Borel action of a countable group. It is \emph{essentially
free} if it is Borel bireducible with a free countable Borel equivalence
relation, or equivalently if it is Borel reducible to one; see
\cite[Chapter~10]{Kechris}. For countable Borel equivalence relations
$E\subseteq F$ on the same standard Borel space, we say that $E$ has
\emph{finite index} in $F$ if each $F$-class contains only finitely many
$E$-classes; no uniform bound is assumed. Kechris asks whether essential
freeness is preserved by such extensions \cite[Problem~10.4]{Kechris}.

\begin{theorem}\label{thm:main}
Let $E\subseteq F$ be countable Borel equivalence relations. If $E$
is essentially free and has finite index in $F$, then $F$ is
essentially free.
\end{theorem}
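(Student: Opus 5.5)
The plan is to encode each $F$-class by the finite set of $R$-classes it is sent to, where $R$ is a free relation receiving $E$, and then to show that the relation ``same finite set of $R$-classes'' is itself free.

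\textbf{Setup.} Let $E\subseteq F$ live on $X$. Fix a Borel reduction $f\colon X\to Y$ of $E$ to $R=E_G^Y$, where $G$ is a countable group acting freely and Borel-ly on $Y$. For $x\in X$ put
\[
S(x)=\{[f(y)]_R : y\mathrel{F}x\}.
\]
Since $[x]_F$ meets only finitely many $E$-classes and $f$ is a reduction, $S(x)$ is a finite set of $R$-classes. Clearly $S(x)$ depends only on $[x]_F$.

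\textbf{$S$ separates $F$-classes.} Suppose $S(x)=S(x')$. Then there are $y\mathrel{F}x$ and $y'\mathrel{F}x'$ with $f(y)\mathrel{R}f(y')$. Since $f$ reduces $E$ to $R$, we get $y\mathrel{E}y'$, hence $y\mathrel{F}y'$ and so $x\mathrel{F}x'$. Thus $x\mathrel{F}x'$ if and only if $S(x)=S(x')$.

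\textbf{Realizing $S$ as a reduction to a free relation.} For each $k\ge1$, let $Y^{(k)}\subseteq Y^k$ be the Borel set of tuples whose coordinates are pairwise $R$-inequivalent. Let the wreath product $G\wr S_k=G^k\rtimes S_k$ act on $Y^{(k)}$ by
\[
(g,\sigma)\cdot(y_i)_i=\bigl(g_i\,y_{\sigma^{-1}(i)}\bigr)_i .
\]
This action is free. Indeed, suppose $(g,\sigma)$ fixes $(y_i)_i$. Then $y_{\sigma^{-1}(i)}\mathrel{R}y_i$ for every $i$. Because the coordinates lie in distinct $R$-classes, $\sigma=\mathrm{id}$, and then freeness of the $G$-action gives $g_i=1$ for all $i$. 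Two tuples are in the same orbit exactly when they enumerate the same set of $R$-classes. Taking the disjoint union over $k$ of these free actions gives a free action of a countable group (or a countable disjoint union of free relations, which is again essentially free). Its orbit relation $R^{\mathrm{fin}}$ is therefore free.

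\textbf{Building the map.} Using Lusin--Novikov, choose Borel maps $x\mapsto y_n(x)$, $n\in\mathbb N$, enumerating $[x]_F$. Define $\Phi(x)$ to be the tuple $(f(y_{n_1}(x)),\dots,f(y_{n_k}(x)))$, where $n_1<n_2<\cdots$ are the indices at which $f(y_n(x))$ first lands in an $R$-class not hit earlier. By finiteness of $S(x)$, this process stops after $k=|S(x)|$ steps. Then $\Phi(x)\in Y^{(k)}$, and its orbit under $G\wr S_k$ encodes exactly $S(x)$. By the previous step, $\Phi$ reduces $F$ to $R^{\mathrm{fin}}$.

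\textbf{Main obstacle.} I expect the main difficulty to be verifying that $\Phi$ is Borel. The issue is that the stopping point is not obviously Borel, since one cannot directly test ``no new class appears later.''

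To handle this, I would write the condition ``the set $\{n_1,\dots,n_k\}$ is complete'' as
\[
\forall m\;\exists j\le k\;\bigl(f(y_m(x))\mathrel{R}f(y_{n_j}(x))\bigr).
\]
This is a countable intersection of Borel conditions, because $R$ is Borel. Hence the sets $\{x:|S(x)|=k\}$ are Borel, and $\Phi$ is Borel on each of these pieces.

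It follows that $F\leB R^{\mathrm{fin}}$, and $R^{\mathrm{fin}}$ is free. Therefore $F$ is essentially free.
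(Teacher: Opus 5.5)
Your proof is correct and rests on the same core idea as the paper: the wreath product $G^k\rtimes S_k$ acts freely on $k$-tuples of pairwise $R$-inequivalent points, and its orbits record exactly the set of $R$-classes met. So it suffices to reduce $F$ to the disjoint sum of these orbit relations.

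The difference lies in how that reduction is produced. The paper chains three steps:
\begin{enumerate}
\item Kaya's result $F\leB E^{\mathrm{fin}}$, which it cites rather than reproves;
\item the map $a\mapsto f[a]$ into $Q^{\mathrm{fin}}$ on $\Fin(Y)$;
\item a canonical listing of finite subsets of $Y$ via a Borel linear order.
\end{enumerate}
You build the reduction in one stroke. You enumerate $[x]_F$ by Borel maps $y_n$, push forward by $f$, and keep the first index at which each new $R$-class appears. Since $f$ reduces $E$ to $R$, these first-occurrence indices are exactly the first representatives of the $E$-classes in $[x]_F$. In effect you reprove Kaya's reduction inline, with the enumeration order replacing the Borel linear order.

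Your route is self-contained and avoids the hyperspace $\Fin(X)$. The paper's route is shorter and isolates the reusable fact that $Q^{\mathrm{fin}}$ is essentially free whenever $Q$ is free. Your Borelness check is sound: each set $\{x: n_1(x)=a_1,\dots,n_j(x)=a_j\}$ is a finite Boolean combination of Borel sets, so the pieces $\{x:|S(x)|=k\}$ are Borel.

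One correction: the orbit relation on $\bigsqcup_k Y^{(k)}$ is not free in general, so your two assertions that it is (or that it comes from a free action of one countable group) are false. If $G$ is finite, the classes on $Y^{(k)}$ have $|G|^k k!$ elements, which depends on $k$. But all orbits of a free action of a single group have the same cardinality. What you actually need, and what your parenthetical supplies, is only that a countable disjoint sum of free relations is essentially free. That is the same closure property the paper invokes, so your conclusion stands.
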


\begin{proof}
For a standard Borel space $X$, let $\Fin(X)$ be the standard Borel
space of its nonempty finite subsets. Given any countable Borel
equivalence relation $R$ on $X$, define $R^{\mathrm{fin}}$ on
$\Fin(X)$ by
$$
 a\mathrel{R^{\mathrm{fin}}}b
 \quad\Longleftrightarrow\quad
 \{[z]_R:z\in a\}=\{[z]_R:z\in b\}.
$$

Now let $E, F$ be as given. By a result of Kaya \cite[Proposition~11]{Kaya}, we have $F\leB E^{\mathrm{fin}}$.

Since $E$ is essentially free, there exists a free countable Borel equivalence relation $Q$ on some standard Borel space $Y$, and a Borel
reduction $f$ of $E$ to $Q$. The map $a\mapsto f[a]$ reduces
$E^{\mathrm{fin}}$ to $Q^{\mathrm{fin}}$, as also noted by Kaya.
It therefore suffices to show that $Q^{\mathrm{fin}}$ is essentially
free.

Let $\Gamma\curvearrowright Y$ be a free Borel action inducing $Q$.
For each $n\geq 1$, put
$$
 D_n=\{(y_i)_{i<n}\in Y^n:
      i\neq j\Longrightarrow\neg(y_i\mathrel Q y_j)\}.
$$
The wreath product $\Gamma^n\rtimes\mathfrak S_n$ acts on $D_n$ by
$$
 ((\gamma_i)_{i<n},\sigma)\cdot(y_i)_{i<n}
   =(\gamma_i\cdot y_{\sigma^{-1}(i)})_{i<n}.
$$
This action is free: if an element fixes a tuple, then
$y_i\mathrel Q y_{\sigma^{-1}(i)}$ for every $i$, so the pairwise
inequivalence of the coordinates forces $\sigma$ to be the identity.
Freeness of the original action then gives $\gamma_i=1_\Gamma$ for
all $i$. Let $R_n$ be the resulting orbit equivalence relation.
Two tuples are $R_n$-equivalent exactly when their coordinates
represent the same collection of $Q$-classes.

Fix a Borel linear order on $Y$. For each $a\in\Fin(Y)$, choose the
least element of $a$ in each $Q$-class that it meets, and list the
chosen elements in increasing order. This is a Borel map into the
disjoint union of the spaces $D_n$, and the preceding description
of $R_n$ shows that it is a reduction:
$$
 Q^{\mathrm{fin}}\leB\bigoplus_{n\geq 1}R_n.
$$
Essential freeness is closed under countable disjoint sums and
Borel reducibility \cite[Chapter~10]{Kechris}. Since each $R_n$ is
free, $Q^{\mathrm{fin}}$, and hence $F$, is essentially free.
\end{proof}

Note that the finiteness assumption cannot be replaced by countable index.
Indeed, every countable Borel equivalence relation $F$ on $X$ extends
the equality relation $\Delta_X$ with countable index, and $\Delta_X$
is free, being induced by the trivial group. 

We conclude by remarking that the present argument does not apply to the corresponding question for treeability is posed in
\cite[Section~6.4(B)]{JKL}. The point used above is that the relevant
wreath-product actions remain free, but the freeness of the wreath-product actions gives no treeing of their orbit equivalence relations.


\begin{thebibliography}{9}

\bibitem{JKL}
S.~Jackson, A.~S. Kechris, and A.~Louveau,
\emph{Countable Borel equivalence relations},
J. Math. Log. \textbf{2} (2002), no.~1, 1--80.

\bibitem{Kaya}
B.~Kaya,
\emph{The complexity of the topological conjugacy problem for
Toeplitz subshifts},
Israel J. Math. \textbf{220} (2017), no.~2, 873--897.

\bibitem{Kechris}
A.~S. Kechris,
\emph{The Theory of Countable Borel Equivalence Relations},
Cambridge Tracts in Mathematics, vol.~234,
Cambridge University Press, Cambridge, 2024.

\end{thebibliography}
\end{document}